\documentclass[12pt]{amsart}

\usepackage{fullpage}
\usepackage{mathrsfs, amssymb,amsthm, amsfonts, amsmath}
\usepackage[all]{xy}
\usepackage{upgreek}
\usepackage{amsmath}

\sloppy \pagestyle{plain}

\textwidth=16cm \textheight=23cm \oddsidemargin=0cm
\evensidemargin=0cm \topmargin=-20pt

\pagenumbering{arabic}

\newtheorem{theorem}[equation]{Theorem}
\newtheorem*{theorem*}{Theorem}

\newtheorem{question}[equation]{Question}

\theoremstyle{definition}

\newtheorem*{definition*}{Definition}

\theoremstyle{remark}



\newcommand{\QQ}{\mathbb{Q}}

\newcommand{\PP}{\mathbb{P}}
\newcommand{\KK}{\mathbb{K}}
\newcommand{\LL}{\mathbb{L}}

\newcommand{\mumu}{{\boldsymbol{\mu}}}

\newcommand{\Aut}{\operatorname{Aut}}

\newcommand{\PGL}{\operatorname{PGL}}

\newcommand{\Gal}{\operatorname{Gal}}

\date{}

\title{Non-abelian groups acting on Severi--Brauer surfaces}

\author{Constantin Shramov}

\address{Steklov Mathematical Institute of Russian Academy of Sciences, 8 Gubkina st.,
Moscow, 119991, Russia}

\email{costya.shramov@gmail.com}

\begin{document}

\begin{abstract}
We provide examples of finite non-abelian groups acting on non-trivial Severi--Brauer surfaces.
\end{abstract}

\maketitle

A \emph{Severi--Brauer surface} over a field $\KK$ is a surface that becomes isomorphic to $\PP^2$ over the algebraic closure of~$\KK$.
Concerning birational automorphisms of non-trivial Severi--Brauer surfaces (i.e. those that are not isomorphic to~$\PP^2$)
the following is known.

\begin{theorem}[{\cite[Theorem~1.2(ii)]{Shramov-SB}}]
\label{theorem:J-3}
Let $S$ be a non-trivial Severi--Brauer surface over a field $\KK$ of characteristic zero.
Then every finite group acting by birational automorphisms of $S$ is either abelian, or contains a normal abelian subgroup
of index~$3$.
\end{theorem}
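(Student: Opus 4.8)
The plan is to reduce the birational problem to a question about finite subgroups of $\Aut(S')=\PGL_{1}(D')=(D')^{*}/\KK^{*}$ for a degree-$3$ division algebra $D'$, and then to settle that question by a short argument showing such subgroups have odd order. First I would regularize the action: for a finite $G\subset\Bir(S)$ there is a smooth projective surface $X$ with a biregular $G$-action and a $G$-equivariant birational map $X\dashrightarrow S$. Running the $G$-equivariant minimal model program, I may assume that $X$ is a $G$-Mori fibre space, namely a $G$-conic bundle or a $G$-del Pezzo surface with $\Pic(X)^{G}=\ZZ$. Since $S$ becomes $\PP^{2}$ over $\bar\KK$, the surface $X$ is geometrically rational; and since $S$ has no $\KK$-point (the algebra is a division algebra), no surface birational to $S$ can be $\KK$-rational, so in particular $X\not\cong\PP^{2}$.

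The crux of this first stage, and the step I expect to be the main obstacle, is to show that the surviving Mori fibre space $X$ is itself a non-trivial Severi--Brauer surface. Here one uses the birational theory of Severi--Brauer surfaces: by Amitsur's theorem the $\KK$-birational class of $S$ is governed by the order-$3$ Brauer class of $S$, and this period-$3$ obstruction is incompatible both with a conic-bundle structure and with del Pezzo models of degree different from $9$; the only relatively minimal geometrically rational surfaces in the birational class of $S$ are non-trivial Severi--Brauer surfaces. Granting this, $X=S'$ is such a surface, and the biregular action yields an embedding $G\hookrightarrow\Aut(S')=(D')^{*}/\KK^{*}$ with $D'$ a central division algebra of degree $3$ over $\KK$.

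The remaining step is elementary. Write $D=D'$. Since $3$ is prime, every subfield of $D$ has degree $1$ or $3$ over $\KK$, so $D$ contains no quadratic subfield. Consequently $D^{*}/\KK^{*}$ has no element of order $2$: such an element would lift to $g\in D^{*}$ with $g^{2}\in\KK^{*}$ and $g\notin\KK$, and then $\KK(g)$ would be a degree-$2$ subfield of $D$. By Cauchy's theorem it follows that every finite subgroup $\bar G\subset D^{*}/\KK^{*}$ has odd order.

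Finally I would feed this into the classical classification of finite subgroups of $\PGL_{3}(\bar\KK)=\Aut(\PP^{2}_{\bar\KK})$, which contains $\bar G$. The three exceptional primitive subgroups $A_{5}$, $\operatorname{PSL}_{2}(\mathbb{F}_{7})$ and $A_{6}$ all have even order and are therefore excluded. Every other finite subgroup has an abelian normal subgroup whose quotient embeds either in the Weyl group $\SSS_{3}=W(A_{2})$ (the imprimitive, monomial case) or in $\SL_{2}(\mathbb{F}_{3})$ acting on a Heisenberg group $(\ZZ/3)^{2}$ (the remaining primitive case). Since $\bar G$ has odd order, its image collapses in the first case to $A_{3}=\ZZ/3$, and in the second case to a Sylow $3$-subgroup of $\SL_{2}(\mathbb{F}_{3})$, which is again $\ZZ/3$ because $\SL_{2}(\mathbb{F}_{3})$ has a unique involution. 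In every case $\bar G$ is abelian or has an abelian normal subgroup of index $3$, as claimed. The index $3$ is transparent on the algebra side: a non-abelian $\bar G$ normalises a cubic subfield $L\subset D$, and the resulting homomorphism $\bar G\to\Gal(L/\KK)\cong\ZZ/3$ has abelian kernel $\bar G\cap(L^{*}/\KK^{*})$.
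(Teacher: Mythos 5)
You should first be aware that the present paper does not prove Theorem~\ref{theorem:J-3}: it is imported from \cite[Theorem~1.2(ii)]{Shramov-SB}, so the comparison below is with that proof rather than with anything in this text. Your overall plan --- regularize, run the $G$-equivariant MMP, identify the possible $G$-minimal models, then analyse their automorphism groups --- is the right one and is essentially the strategy of \cite{Shramov-SB}. Two of your ingredients are correct and nicely put: a finite subgroup of $D^*/\KK^*$ for a degree-$3$ division algebra $D$ has odd order because $D$ has no quadratic subfields, and an odd-order finite subgroup of $\PGL_3(\bar\KK)$ is abelian or abelian-by-$(\ZZ/3\ZZ)$ by Blichfeldt's classification.

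The gap is precisely at the step you yourself identify as the main obstacle, and the justification you offer for it does not work. The ``period-$3$ obstruction'' amounts, via Lang--Nishimura, to the statement that no smooth projective surface birational to $S$ has a closed point of degree coprime to $3$. This rules out conic bundles (the base conic and a fibre over one of its closed points give a point of degree dividing $4$) and del Pezzo surfaces of degrees $1,2,4,5,7,8$ (each carries a zero-cycle of degree coprime to $3$). It does \emph{not} rule out del Pezzo surfaces of degree $6$ or $3$, whose natural zero-cycles have degrees $6$ and $3$; and such surfaces genuinely lie in the birational class of $S$, since blowing up a closed point of degree $3$ (resp.\ a suitable closed point of degree $6$) on $S$ yields a del Pezzo surface of degree $6$ (resp.\ $3$) birational to $S$. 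So your assertion that the only relatively minimal models in the class are non-trivial Severi--Brauer surfaces is not a consequence of Amitsur's theorem as stated. What actually has to be proved is that no del Pezzo surface of degree $6$ or $3$ in this birational class is $G$-minimal (by analysing the action of $G$ together with the Galois group on the configuration of $(-1)$-curves, e.g.\ showing that the exceptional ``triangle'' over $S$ is invariant because an automorphism swapping the two triangles of a degree-$6$ del Pezzo would force $A\cong A^{\mathrm{op}}$, impossible for a class of order $3$), or else one must treat $\Aut$ of those models separately; this is the technical heart of \cite{Shramov-SB} and is absent from your sketch. A smaller point: your closing remark that a non-abelian $\bar G\subset D^*/\KK^*$ ``normalises a cubic subfield'' is also unproved (commuting in $D^*/\KK^*$ does not imply commuting in $D^*$, so the subalgebra generated by an abelian normal subgroup need not be a field), but this is harmless since your Blichfeldt argument already covers that case.
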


The purpose of this note is to prove the following. We denote by $\mumu_n$ the cyclic group of order $n$.

\begin{theorem}\label{theorem:attained}
Let $p=3k+1$ be a prime number. Consider the non-trivial
homomorphism~\mbox{$\mumu_3\to\mumu_{3k}\cong\Aut(\mumu_p)$}, and let $G_p=\mumu_p\rtimes\mumu_3$ be the corresponding semi-direct product. Then
there exists a number field $\KK$ and a non-trivial Severi--Brauer surface $S$ over~$\KK$ such that
the group $\Aut(S)$ contains the group~$G_p$.
\end{theorem}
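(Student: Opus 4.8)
\emph{Proof proposal.} The plan is to realize $G_p$ inside $\Aut(S)=\PGL_1(A)=A^*/\KK^*$ for a suitable cyclic division algebra $A$ of degree $3$, using a cyclotomic field to supply the element of order $p$. First I would set $L=\QQ(\zeta_p)$, where $\zeta_p$ is a primitive $p$-th root of unity; its Galois group over $\QQ$ is cyclic of order $p-1=3k$. Let $H$ be the unique subgroup of order~$3$ and put $\KK=L^H$. Then $L/\KK$ is a cyclic cubic extension of number fields with $\Gal(L/\KK)=H$, and a generator $\sigma$ of $\Gal(L/\KK)$ acts on $\zeta_p$ by $\sigma(\zeta_p)=\zeta_p^{\,t}$ for some $t\in(\ZZ/p\ZZ)^*$ of exact order $3$ (this is where the hypothesis $p=3k+1$ is used: it guarantees both that $(\ZZ/p\ZZ)^*\cong\mumu_{3k}$ has an order-$3$ element and that $\zeta_p$ lies in a cubic extension of $\KK$).

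Next I would form the cyclic algebra $A=(L/\KK,\sigma,b)$, generated over $L$ by an element $u$ with $u^3=b\in\KK^*$ and $uxu^{-1}=\sigma(x)$ for all $x\in L$. Since $\deg A=3$ is prime, $A$ is either split or a division algebra, and it is a division algebra precisely when $b\notin N_{L/\KK}(L^*)$. The crucial point to arrange is therefore the existence of such a non-norm $b$. Because $L/\KK$ is a nontrivial cyclic extension, the norm map is not surjective, so a suitable $b$ exists; concretely, by Chebotarev there is a prime $\mathfrak{q}$ of $\KK$ that is inert in $L$, and any $b\in\KK^*$ of valuation $1$ at $\mathfrak{q}$ fails to be a local norm there (norms from the unramified cubic local extension have valuation divisible by $3$), hence is not a global norm. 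With this choice $A$ is a division algebra, and the associated Severi--Brauer surface $S=\mathrm{SB}(A)$ is non-trivial over $\KK$.

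Finally I would invoke Ch\^atelet's description $\Aut(S)=\PGL_1(A)=A^*/\KK^*$ and track two distinguished elements. The image $\bar\zeta_p$ of $\zeta_p\in L^*\subset A^*$ has order exactly $p$: its order divides $p$ since $\zeta_p^p=1$, and it is greater than $1$ because $\zeta_p$ generates $L\neq\KK$ over $\QQ$ and so $\zeta_p\notin\KK^*$. The image $\bar u$ of $u$ has order $3$, since $\bar u^3=\bar b=1$ while $u\notin\KK^*$. The defining relation of the algebra yields $\bar u\,\bar\zeta_p\,\bar u^{-1}=\overline{\sigma(\zeta_p)}=\bar\zeta_p^{\,t}$ with $t$ of order $3$ in $\Aut(\mumu_p)\cong(\ZZ/p\ZZ)^*$. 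Thus $\langle\bar\zeta_p\rangle\cong\mumu_p$ is normalized by $\bar u$, the subgroup $\langle\bar\zeta_p,\bar u\rangle$ has order $3p$, and it is isomorphic to $\mumu_p\rtimes\mumu_3=G_p$ realizing exactly the prescribed nontrivial homomorphism $\mumu_3\to\Aut(\mumu_p)$ (the two such homomorphisms differ by an automorphism of $\mumu_3$ and give isomorphic groups, so the choice of generator $\sigma$ is immaterial). This exhibits $G_p\hookrightarrow\Aut(S)$, as required.

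I expect the only genuine obstacle to be the non-splitness of $A$: everything else is a structural computation inside the cyclic algebra, whereas guaranteeing $A$ is a division algebra requires producing a non-norm element, i.e.\ the (standard) input from class field theory that the norm of a nontrivial cyclic extension of number fields is not surjective. It is worth noting that the resulting $G_p$ is precisely the extremal non-abelian case allowed by Theorem~\ref{theorem:J-3}, namely a group with a normal abelian subgroup of index $3$.
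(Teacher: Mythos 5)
Your proposal is correct and follows essentially the same route as the paper: the same cyclotomic construction of $\LL/\KK$, the same degree-$3$ cyclic division algebra, and the same identification of $G_p$ inside $A^*/\KK^*\cong\Aut(S)$. The only difference is that where the paper cites Stern's theorem for the existence of a non-norm element $a\in\KK^*$, you give a direct (and valid) argument via Chebotarev and local norms at an inert prime.
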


Using the notion of the \emph{Jordan constant} 
(see \cite[Definition~1]{Popov}), 
one can reformulate Theorem~\ref{theorem:J-3} by saying that the Jordan constant of the birational
automorphism group (and thus also of the usual automorphism group) of a non-trivial Severi--Brauer surface over a field of characteristic zero is at most~$3$.
In these terms, Theorem~\ref{theorem:attained} states that for a suitable number field $\KK$ and a suitable
non-trivial Severi--Brauer surface $S$ over~$\KK$ the Jordan constant of the automorphism group of $S$
(and thus also of its birational automorphism group) attains this bound.

\begin{proof}[Proof of Theorem~\ref{theorem:attained}]
Let $\xi$ denote a non-trivial root of unity of degree~$p$, 
and let~\mbox{$\LL=\QQ(\xi)$}.
Then $\LL/\QQ$ is a Galois extension with
the Galois group isomorphic to $\mumu_{p-1}$, see for instance  \cite[Lemma~III.1.1]{CasselsFrolich}.
Let $\KK\subset\LL$ be the field of invariants of the (unique) subgroup~\mbox{$\mumu_{3}\subset\Gal(\LL/\QQ)$}.
Then $\LL/\KK$ is a Galois extension with
the Galois group $\Gal(\LL/\KK)$ isomorphic to~$\mumu_3$.
A generator $\sigma$ of $\Gal(\LL/\KK)$ sends $\xi\in\LL$ to $\xi^d$, where $d\neq 1$ is an integer such that $d^3\equiv 1\pmod p$.

There exists an element $a\in\KK$ such that $a$
is not contained in the image of the Galois norm of the field extension $\LL/\KK$, see \cite[Theorem~1(b)]{Stern}.
We consider the cyclic algebra~$A$ over $\KK$ associated with $\sigma$ and $a$,
see \cite[\S2.5]{GilleSzamuely} or \cite[Exercise~3.1.6]{GorchinskyShramov}.
In other words, $A$ is generated over $\KK$ by $\LL$ and an element $\alpha$ subject to relations
$\alpha^3=a$ and
\begin{equation*}
\lambda \alpha=\alpha\sigma(\lambda), \quad \lambda\in\LL,
\end{equation*}
so that in particular we have
\begin{equation*}
\xi \alpha=\alpha\xi^d.
\end{equation*}
Then $A$ is a central simple algebra. However, $A$ is not a matrix algebra, see for instance~\mbox{\cite[Exercise~3.1.6(i)]{GorchinskyShramov}}.
Since the dimension of $A$ over $\KK$ is $9$, we conclude from the theorem of Wedderburn (see \cite[Theorem~2.1.3]{GilleSzamuely}) that
$A$ is a division algebra.

The order of $\alpha$ in the multiplicative group $A^*$ equals $3$; since $\alpha\not\in \KK$, the order
of its image $\hat{\alpha}$ in the quotient group $A^*/\KK^*$ equals $3$ as well.
Similarly, one has $\xi^p\in\KK$, and since $\xi\not\in\KK$, we conclude that for any $r<p$
one has $\xi^r\not\in\KK$. Thus the order of the image~$\hat{\xi}$ of $\xi$ in $A^*/\KK^*$  equals $p$.
Furthermore, we have
$$
\hat{\xi}\hat{\alpha}=\hat{\alpha}\hat{\xi}^d.
$$
These are exactly the defining relations of the group $G_p$. This means that the group $\hat{G}_p$
generated by $\hat{\xi}$ and $\hat{\alpha}$ in $A^*/\KK^*$ is a quotient of $G_p$ such that $\hat{G}_p$ contains an element
of order $p$ and an element of order~$3$. Hence $\hat{G}_p\cong G_p$.

We see that the elements $\hat{\xi}$ and $\hat{\alpha}$  generate a subgroup isomorphic to $G_p$ in $A^*/\KK^*$.
It remains to recall that for a Severi--Brauer surface $S$ corresponding to
$A$ one has~\mbox{$\Aut(S)\cong A^*/\KK^*$}, see for instance
\cite[Lemma~4.1]{ShramovVologodsky}.
\end{proof}

It would be interesting to obtain a complete classification 
of finite groups acting on non-trivial Severi--Brauer surfaces, 
similarly to what was done for conics in~\cite{Garcia-Armas}.
Also, we point out that for a given number field~$\KK$ there is only a finite number of finite groups 
$G$ such that there exists a Severi--Brauer surface over $\KK$ with an action of~$G$, cf.~\mbox{\cite[Theorem~1.4]{Prokhorov-Shramov-J}}. This follows from the observation 
that any group $G$ like this acts faithfully on the anticanonical linear system of the Severi--Brauer surface, and thus is embedded into~\mbox{$\PGL_{10}(\KK)$}.
On the other hand, 
the latter group contains only a finite number of finite subgroups up to 
isomorphism by the (generalized) theorem of Minkowski, see for instance~\cite[Theorem~5]{Serre-07}.  
I do not know the answer to the following question.

\begin{question}
Does there exist an (infinite) extension $\KK'$ of $\QQ$ and a non-trivial Severi--Brauer surface $S'$
over $\KK'$ such that the group $\Aut(S)$ contains all the groups~$G_p$?
\end{question}

\textbf{Acknowledgements.}
I am grateful to S.\,Gorchinskiy and D.\,Osipov for useful discussions.


\end{document}